\numberwithin{equation}{section}
\newtheorem{theorem} {Theorem}[section]
\newtheorem{proposition} [theorem]{Proposition}
\newtheorem{lemma}[theorem]{Lemma}
\newtheorem{definition}[theorem] {Definition}
\newcommand{\pa} {\partial}
 \newcommand{\al} {\alpha}
\newcommand{\rr}{\rightarrow}
\newcommand{\B} {\beta}
\newcommand{\de} {\delta}
\newcommand{\ga}{\gamma}
\newcommand{\p}  {\prime}
\newcommand{\e}  {\epsilon}
\newcommand{\De} {\Delta}
\newcommand{\si} {\sigma}
\newcommand{\La} {\Lambda}
\newcommand{\f}{\infty}
\newcommand{\R}{\mathbb{R}}
\newcommand{\noi} {\noindent}
\DeclareMathOperator{\dv}{div}
\newcommand{\norm}[1]{\left|\hspace{-0.2mm}\left| #1 \right|\hspace{-0.2mm}\right|}
\newcommand{\abs}[1]{\left| #1\right|}
\date{}
\newcommand{\U}{\textbf{u}}
\newcommand{\V}{\textbf{v}}
\newcommand{\vr}{\varrho}
\newcommand{\nb}{\nabla}
\newcommand{\dom}{\int\limits_{\Omega}}
\newcommand{\domm}{\int\limits_{\tau_1}^{\tau_2}\int\limits_{\Omega}}
\title{Weak-Strong Uniqueness for the Isentropic Euler Equations with Possible Vacuum}
\author{Shyam Sundar Ghoshal\footnote{Centre for Applicable Mathematics, Tata Institute of Fundamental Research, Post Bag No 6503, Sharadanagar, Bangalore -- 560065, India. E-mail: ghoshal@tifrbng.res.in}, Animesh Jana\footnote{Centre for Applicable Mathematics, Tata Institute of Fundamental Research, Post Bag No 6503, Sharadanagar, Bangalore -- 560065, India. E-mail: animesh@tifrbng.res.in},  and Emil Wiedemann\footnote{Institute of Applied Analysis, Ulm University, Helmholtzstr.~18, 89081 Ulm, Germany. E-mail: emil.wiedemann@uni-ulm.de}}
\begin{document}
\maketitle

\begin{abstract}
We establish a weak-strong uniqueness result for the isentropic compressible Euler equations, that is: As long as a sufficiently regular solution exists, all energy-admissible weak solutions with the same initial data coincide with it. The main novelty in this contribution, compared to previous literature, is that we allow for possible vacuum in the strong solution. 
\end{abstract}

\section{Introduction}
We consider the Cauchy problem for the isentropic Euler system in arbitrary space dimension:
\begin{align}
\pa_t\vr +\dv_x(\vr\U)&=0,\label{isen1}\\
\pa_t(\vr\U) +\dv_x(\vr\U\otimes\U)+\nb_xp(\vr)&=0,\label{isen2}\\
(\vr,\U)(0,x)&=(\vr_0,\U_0),\nonumber
\end{align}
where $p(\varrho)=\kappa\varrho^\gamma$ and $\gamma>1$ is the \emph{adiabatic exponent} of the fluid. For reasons discussed below, we will consider solutions only on the whole space $\R^N$; our results remain valid on the torus, but presumably not on domains with physical boundaries. 

Since, through the mechanism of shock formation (or, conceivably, otherwise), an initially smooth solution may become discontinuous in finite time, it is standard to consider \emph{weak solutions} to hyperbolic conservation laws: 

\begin{definition}
We say $(\varrho,\textbf{u})\in C\left([0,T],L_{loc}^1(\R^N;\R_+\times\R^N)\right)$ is a weak solution to the isentropic Euler system \eqref{isen1}--\eqref{isen2} if also $\varrho\textbf{u}\in C\left([0,T],L_{loc}^1(\R^N;\R^N)\right)$ and if the following holds:

\begin{enumerate}
	\item 
	\begin{equation*}
	\int\limits_{\tau_1}^{\tau_2}\int\limits_{\R^N}\varrho\pa_t\varphi+\varrho\textbf{u}\cdot\nabla_x\varphi\,dxdt=\int\limits_{\R^N}\varrho\varphi(\tau_2,\cdot)\,dx-\int\limits_{\R^N}\varrho\varphi(\tau_1,\cdot)\,dx
	\end{equation*}
	for $\varphi\in C_c^{\f}([0,T]\times\R^N)$ and $0\leq\tau_1<\tau_2\leq T$;

    \item 
    \begin{equation*}
     \int\limits_{\tau_1}^{\tau_2}\int\limits_{\R^N}\varrho\textbf{u}\cdot\pa_t\psi+\varrho\textbf{u}\otimes\textbf{u}:\nabla_x\psi+p(\varrho)\dv_x\psi\,dxdt=\int\limits_{\R^N}\varrho\textbf{u}\cdot\psi(\tau_2,\cdot)\,dx-\int\limits_{\R^N}\varrho\textbf{u}\cdot\psi(\tau_1,\cdot)\,dx
    \end{equation*}
   for $\psi\in C_c^{\f}([0,T]\times\R^N,\R^N)$ and $0\leq\tau_1<\tau_2\leq T$.
\end{enumerate}
It is understood that all appearing integrands are integrable.
\end{definition}

It is well-known that conservation laws may exhibit `non-physical shocks' and thus nonuniqueness of the Cauchy problem. To rule out such unphysical solutions, one invokes the following energy admissibility condition:

\begin{definition} We say a weak solution of \eqref{isen1}--\eqref{isen2} is admissible if 
\begin{equation}\label{energy-inequality}
\int\limits_{\R^N}\left[\frac{1}{2}\varrho\abs{\textbf{v}}^2+H(\varrho)\right](\tau,\cdot)\,dx\leq\int\limits_{\R^N}\frac{1}{2}\varrho_0\abs{\textbf{v}_0}^2+H(\varrho_0)\,dx<\f
\end{equation}
for $0\leq\tau\leq T$, where $H(\varrho)=\frac{\kappa}{\gamma-1}\varrho^{\gamma}$ is the pressure potential.
\end{definition}

Note that this admissibility condition is much weaker than the usual entropy condition, which in the case of the isentropic Euler equations would take the form of a \emph{local} energy balance inequality, whereas~\eqref{energy-inequality} is an inequality only for the \emph{total} energy. Moreover, we do not require the total energy to be non-increasing in time, but merely ask that it do not exceed its initial value. It turns out, however, that this very weak admissibility property already suffices for weak-strong uniqueness.

The principle of weak-strong uniqueness states that each admissible weak solution with certain initial data coincides with the strong solution evolving from the same data, as long as the strong solution exists. Before we explain in detail what exactly a `strong solution' is supposed to be, let us emphasize that we cannot hope for an unconditional uniqueness statement. Indeed, admissible weak solutions of the isentropic Euler equations have been known to be non-unique for certain initial data, even under much stronger admissibility assumptioms than~\eqref{energy-inequality}. Constructions of such non-unique solutions rely on a so-called convex integration scheme~\cite{DLS2, Chiodaroli, Chio, AW, Vasseur}.

Weak-strong uniqueness for conservation laws was pioneered by C.~Dafermos and R.~DiPerna~\cite{Dafermos, DiPerna} and has since been applied and extended in various ways, see for instance~\cite{Chen1, BDLSz, DST, GGW, GKS}, Section 5.3 in the book~\cite{DafermosBook}, or the recent survey~\cite{Wiedemann}. While, in many articles, the strong solution is assumed $C^1$ or at least Lipschitz in space, with Lipschitz constant integrable in time, this regularity class does not quite include rarefaction waves, which are unique among admissible weak solutions nevertheless for the isentropic and full Euler systems~\cite{ChenChen, FK, FKV}. Motivated by the study of rarefaction waves, it has been observed that the strong solution does, generally, not need to be Lipschitz, but merely one-sided Lipschitz together with a certain degree of Besov regularity, in order to guarantee weak-strong uniqueness~\cite{FGJ, GJ, GJK}. 

However, all the mentioned results require the strong solution (although not the weak one) to have density bounded away from zero, so that vacuum states are excluded. In fact, the Euler system is strictly hyperbolic only on the set $\{\rho>0\}$, so that vacuum states always pose a significant obstacle to the analysis of compressible flow. In~\cite{ChenChen}, certain vacuum regions arising in rarefaction waves were treated; some Onsager-type energy conservation criteria were given for the isentropic Euler system with possible vacuum in~\cite{ADSW}; and a localized version of weak-strong uniqueness was given in~\cite{Wiedemann2} for the isentropic Euler equations with possible vacuum for adiabatic exponents $\gamma\geq 2$. For the compressible Navier-Stokes system, Feireisl and Novotn\'y~\cite{FN} very recently showed weak-strong uniqueness with possible vacuum in the strong solution on various types of domain.

Here, for the first time, we prove a weak-strong uniqueness theorem for strong solutions of the isentropic Euler equations whose density is not assumed bounded away from zero and which are not assumed to have any specific structure (like rarefaction waves). Instead, we show that conditions~\eqref{relation-theta-beta} and~\eqref{thetacondition} below suffice for weak-strong uniqueness. This condition is strictly weaker than boundedness away from zero and therefore admits certain vacuum states.

More precisely, we define a strong solution with possible vacuum as follows, inspired by \cite{LiuYang}:   

   \begin{definition}\label{def:regular-soln-1}
	We say $(\vr,\U)\in C([0,T), L^1(\R^N,[0,\f)\times\R^N))\cap L^\f([0,T)\times\R^N;[0,\f)\times\R^N)$ is a strong solution of the system \eqref{isen1}--\eqref{isen2} if $(\varrho,\U)$ is a weak solution to \eqref{isen1}--\eqref{isen2} and satisfies the following:
	\begin{enumerate}[(i)]
		\item\label{A1-1} $\U\in B^{\al,\f}_{2q}((\de,T)\times{\R^N})$, $\varrho\in B^{\B,\f}_{2q}((\de,T)\times{\R^N})$ with $\al\geq \B>\frac{1}{\min\{2,\ga\}}$ and $q\geq 2\ga/(\ga-1)$ for all $\de>0$. There exists a function $\eta\in C^1(\R^{N+1})$ with $supp(\eta)\subset\{\abs{(t,x)}\leq l\}$ for some $l>0$ and $\norm{\eta}_{L^1(\R^{N+1})}=1$, such that the following holds. Define $\mathcal{W}_\e[t_1,t_2]:=\{(x,t)\in [t_1,t_2]\times\R^N: \varrho_\e(x,t)>0\}$ for all $0<t_1<t_2$ where $\varrho_\e=\varrho*\eta_\e$ with $\eta_\e:=\e^{-(N+1)}\eta(z/\e)$. 
		The density $\varrho$ satisfies
		\begin{equation}\label{relation-theta-beta}
         \int\limits_{\mathcal{W}_\e[t_1,t_2]}\frac{1}{\varrho^{\theta}_\e(z)}\,dz\leq C_1
		\end{equation}
		where $C_1$ is independent of $\e$. {In the above inequality, the exponent $\theta$ satisfies
		\begin{equation}\label{thetacondition}
		\theta>\left\{\begin{array}{rl}
         \frac{4\ga^2}{(\ga-1)((\ga-1)^2\B+1-\B)}(1-\B)&\mbox{ if }1<\ga<2,  \\
         \frac{4\ga}{\ga-1}\frac{1-\B}{\B}&\mbox{ if }\ga\geq2.
		\end{array}\right.
		\end{equation}}
		
		%
		%
		%
		\item Moreover, in the exterior of the support of $\vr$, $\U$ is $C^1$ and it satisfies
		\begin{equation}\label{eqn:mom-vacuum-1}
		\pa_t\U+\U\cdot\nabla_x\U=0.
		\end{equation}
		\item\label{A1-3} There exists an integrable function $\La:[0,T]\rr[0,\f)$  such that the following one-sided Lipschitz condition holds:
		\begin{equation}\label{condition-1}
		\int_{\R^N}\left(-\xi\cdot\V(\cdot,t)\nb_x\varphi\cdot\xi+\La(t)|\xi|^2\varphi\right)\geq 0
		\end{equation}
		for any $\xi\in\R^N$, $t\in (0,T]$ and $\varphi\in C^{\f}_c(\R^N)$ with $\varphi\geq 0$.
	\end{enumerate}
\end{definition}


Here, $B^{\al,\infty}_q$ denotes a Besov space. There are several ways to define Besov spaces, but here the only property we use is 
\begin{equation*}
\|u_\e-u\|_{L^q(\Omega_\e)}\leq \epsilon^\alpha\|u\|_{B^{\al,\infty}_q(\Omega)},\quad\quad \|\nabla u_\e\|_{L^q(\Omega_\e)}\leq \epsilon^{\alpha-1}\|u\|_{B^{\al,\infty}_q(\Omega)} 
\end{equation*} 
for any $u\in B^{\al,\infty}_\infty(\Omega)$, where $u_\e$ denotes mollification and $\Omega_\e:=\{x\in\Omega: \operatorname{dist}(x,\partial\Omega)>\e\}$. See also~\cite{CET}.

Our main result can then be stated as follows:
\begin{theorem}\label{theorem:vacuum-1}
	Let $(\vr,\U)$ be an admissible weak solution of the system \eqref{isen1}--\eqref{isen2} with initial data $(\vr_0,\U_0)$. Let $(r,\V)$ be a regular solution of \eqref{isen1}--\eqref{isen2} according to Definition \ref{def:regular-soln-1} with same initial data $(\vr_0,\U_0)$. Then we have 
	\begin{equation*}
	\vr= r\mbox{ and }\vr\U=r\V\mbox{ in }[0,T]\times\R^N.
	\end{equation*}
\end{theorem}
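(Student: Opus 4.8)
The plan is to follow the classical Dafermos--DiPerna relative entropy strategy, adapted to handle the vacuum set of the strong solution via the mollification parameter $\e$. I would introduce the relative energy functional
\begin{equation*}
\mathcal{E}\left[(\vr,\U)\,\big|\,(r,\V)\right](\tau)=\int_{\R^N}\left[\frac{1}{2}\vr\abs{\U-\V}^2+H(\vr)-H'(r)(\vr-r)-H(r)\right](\tau,\cdot)\,dx,
\end{equation*}
and the goal is to show $\mathcal{E}(\tau)\le \mathcal{E}(0)+\int_0^\tau \La(t)\mathcal{E}(t)\,dt$ plus error terms that vanish, so that Gronwall gives $\mathcal{E}\equiv 0$ since $\mathcal{E}(0)=0$; convexity of $H$ together with $\mathcal{E}=0$ then forces $\vr=r$ on $\{r>0\}$, and on the vacuum set of $r$ one argues separately using~\eqref{eqn:mom-vacuum-1} and the energy admissibility~\eqref{energy-inequality} to conclude $\vr=0$ there as well, whence $\vr\U=r\V$ everywhere.

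The main structural difficulty is that $(r,\V)$ is not smooth enough to be used directly as a test function in the weak formulation, and its density degenerates. Here I would use the mollifications $r_\e=r*\eta_\e$, $\V_\e=\V*\eta_\e$ as test functions (more precisely, test the continuity equation against $H'(r_\e)-\tfrac12\abs{\V_\e}^2$ and the momentum equation against $\V_\e$, after suitable spatial truncation to deal with the whole space $\R^N$, which is why the setting is restricted to $\R^N$ or the torus). The commutator errors produced by mollification are controlled by the Besov regularity~(\ref{A1-1}): terms of the form $\|r_\e-r\|_{L^q}$, $\|\nabla \V_\e\|_{L^q}$, etc., carry powers $\e^{\B}$, $\e^{\al-1}$ and so on. The crucial point is that some of these error terms must be paired, via Hölder's inequality, with negative powers of $r_\e$ arising from the pressure terms (since $H'(r)=\frac{\kappa\gamma}{\gamma-1}r^{\gamma-1}$ and derivatives of $H'(r_\e)$ involve $r_\e^{\gamma-2}$ when $\gamma<2$). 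This is exactly where the integrability hypothesis~\eqref{relation-theta-beta} with the exponent condition~\eqref{thetacondition} enters: a careful bookkeeping of the Hölder exponents shows that $\int_{\mathcal{W}_\e}r_\e^{-\theta}$ being uniformly bounded, combined with $\theta$ satisfying~\eqref{thetacondition} and $q\ge 2\gamma/(\gamma-1)$, makes the product of the (negative) density power and the (positive) $\e$-power of the Besov commutator tend to zero as $\e\to 0$. The case split in~\eqref{thetacondition} mirrors the case split $\gamma<2$ versus $\gamma\ge2$, i.e. whether $H''$ blows up or stays bounded near the vacuum.

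Concretely, the order of steps I would carry out is: (1) derive the relative energy inequality in integrated-in-time form, being careful that~\eqref{energy-inequality} gives only $\le$ at the total-energy level but that this suffices because the pressure potential is convex; (2) expand the right-hand side and reorganize it into a ``dissipation-like'' term that is absorbed, a term of the form $\int_0^\tau\La(t)\mathcal{E}(t)\,dt$ produced by the one-sided Lipschitz condition~\eqref{condition-1} applied to $\xi=\U-\V$, and a remainder $R_\e$ collecting all mollification commutators; (3) estimate $R_\e$ using the two Besov bounds displayed after Definition~\ref{def:regular-soln-1}, Hölder's inequality, and~\eqref{relation-theta-beta}, verifying that each exponent inequality needed is implied by~\eqref{thetacondition} and $q\ge 2\gamma/(\gamma-1)$, and send $\e\to0$; (4) apply Gronwall to conclude $\mathcal{E}\equiv0$; (5) translate $\mathcal{E}\equiv0$ into $\vr=r$ (on $\{r>0\}$ from strict convexity of $H$, and on $\{r=0\}$ from the vacuum dynamics~\eqref{eqn:mom-vacuum-1} together with the vanishing of $\int\frac12\vr\abs{\U-\V}^2$) and then $\vr\U=r\V$. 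I expect step (3), the quantitative matching of Besov exponents against the vacuum-integrability exponent $\theta$, to be the main obstacle and the technical heart of the proof; everything else is a fairly standard, if lengthy, relative-energy computation.
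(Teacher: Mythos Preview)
Your overall strategy---relative energy with mollified $(r_\e,\V_\e)$, the one-sided Lipschitz condition~\eqref{condition-1} for the Gronwall term, and Besov commutator errors paired via H\"older with the vacuum integrability~\eqref{relation-theta-beta}---is the paper's. However, your step~(3) glosses over two devices that are essential to close the exponent bookkeeping, and without which the argument as you describe it does not go through.

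First, to extract $\pa_t\V_\e$ from the mollified momentum equation one must divide by $r_\e$, so the commutator remainders $\mathcal{R}_2^\e-\mathcal{R}_1^\e\V_\e$ acquire a prefactor $1/r_\e$ on $\mathcal{W}_\e$; this is separate from, and more singular than, the $r_\e^{\ga-2}$ you attribute to the pressure. A direct H\"older pairing of $1/r_\e$ against~\eqref{relation-theta-beta} would force $\theta$ far larger than~\eqref{thetacondition} permits. The paper therefore introduces a further regularized density $r_\e^\de=r_\e(1+\de r_\e^{-p})^{1/\tilde q}$, which satisfies simultaneously $1/r_\e^\de\leq \de^{-1/\tilde q}r_\e^{-(\tilde q-p)/\tilde q}$ and $|r_\e^\de-r_\e|/r_\e^\de\leq C_{\tilde q}\de r_\e^{-p}$, and couples $\de=\e^\kappa$; the specific choice of $p,\tilde q$ in terms of $\theta$ is precisely what produces condition~\eqref{thetacondition}. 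Second, for $1<\ga<2$ the function $H''(r)$ is unbounded at $r=0$, so $H'(r_\e)$ cannot enter the relative-entropy computation directly; the paper replaces $H$ (and $p$) by $C^2$ approximations $H_\si,p_\si$ with $\|H_\si''\|_\infty\leq C\si^{(\ga-2)/(\min\{\ga,2\}-1)}$, sets $\si=\e^\nu$, and the resulting system of inequalities in $(\kappa,\nu)$---the paper's~\eqref{alg-eq-1}--\eqref{alg-eq-5}---is the actual content of your step~(3). The sharp commutator estimate (Lemma~\ref{lemma:commutator}), which yields the exponent $\min\{\ga,2\}\B-1$ for the pressure commutator rather than a generic $\B-1$, is also needed here. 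A minor correction: the vacuum dynamics~\eqref{eqn:mom-vacuum-1} enter in your step~(2), on the complement of $\mathcal{W}_\e$, to rewrite $\pa_t\V_\e+\V_\e\cdot\nabla\V_\e$ there---not in step~(5), where $\mathcal{E}=0$ already forces $\vr=0$ on $\{r=0\}$ because $H(\vr)-H'(0)\vr-H(0)=H(\vr)\geq 0$.
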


Our proof relies on the well-known relative energy method and on suitable commutator estimates. However, commutators become tricky in presence of vacuum, so we need to estimate the crucial commutators very carefully. This is done in Lemma~\ref{lemma:commutator} below, which can be viewed as as major novel contribution of this work. In Section~\ref{mainproof} we show how to employ the commutator estimate within the relative energy framework and thus conclude the proof of Theorem~\ref{theorem:vacuum-1}.

Two more remarks are in order. First, it is not difficult to check that the somewhat technical conditions~\eqref{relation-theta-beta},~\eqref{thetacondition} are valid if $\rho$ is bounded away from zero and is contained in the Besov space $B^{\alpha,\infty}_q$ for some $\alpha>1/\gamma$ (in particular, $C^\alpha$ would be sufficient). 
But of course the point of this paper is that~\eqref{relation-theta-beta},~\eqref{thetacondition} allow for certain vacuum states as well, as demonstrated in the final Section~\ref{Example}. There, following the presentation of Serre~\cite{Serre}, we construct up to a finite time a fairly smooth solution of the isentropic Euler equations on $\R^2$ whose density has compact support; this solution, however, qualifies as a strong solution according to Definition~\ref{def:regular-soln-1}, and thus satisfies the weak-strong uniqueness principle. This shows that our notion of strong solution, despite its very technical definition, is not merely academic.

Secondly, on domains with physical boundaries one would not expect weak-strong uniqueness, not even in absence of vacuum. For the \emph{incompressible} Euler equations, an example has been given of a perfectly smooth solution on a perfectly smooth bounded domain with initial data that, however, admits infinitely many admissible weak solutions~\cite{BSW}, see also~\cite[Section 5]{Wiedemann}. In the compressible case, it appears to be an open question whether a similar counterexample exists, although we would expect this to be the case.

\section{Commutator estimate}
\begin{lemma}\label{lemma:commutator}
	Let $q\geq2$. Let $\mathcal{O}\subset\R^{m}$ be an open bounded set. Let $\mathcal{O}_1\subset \R^m$ be another bounded open set containing the closure of $\mathcal{O}$. Let $f\in  B^{\al_1,\f}_{q}(\mathcal{O}_1,[0,\f))$ and $g\in B^{\al_2,\f}_{q}(\mathcal{O}_1,\R^k)$ for $\al_1,\al_2\in(0,1)$, and assume $f$ and $g$ are both bounded. Let $G:[0,\f)\times\R^N\rr\R$ be a $C^1$ function such that $D_fG,D_gG$ satisfy the following properties
	
	\begin{enumerate}
		\item (H\"older continuity of $D_fG$ in $f$ variable:)
		\begin{equation*}
		\sup\limits_{\abs{\tilde g}\leq \|g\|_\infty}\sup\limits_{|f_1|,|f_2|\leq \|f\|_\infty,f_1\neq f_2}\frac{\abs{D_fG(f_1,\tilde g)-D_fG(f_2,\tilde g)}}{\abs{f_1-f_2}^\eta}\leq C
		\end{equation*}
		for $\eta\in(0,1]$ and some $C>0$ depending only on $\|f\|_\infty, \|g\|_\infty$.
		
		\item The map $g\mapsto D_fG(f,g)$ is $C^1$ uniformly in $f$ and  $D_gG\in C^1([0,\f)\times \R^N)$.
	\end{enumerate}
	 Then we have
	\begin{equation*}
	\|\nabla_x(G(f,g)_\e-G(f_\e,g_\e))\|_{L^{q/2}(\mathcal{O})}\leq C_1\left(\e^{\al_1(1+\eta)-1}+\e^{2\al_2-1}\right),
	\end{equation*}
	where $C_1$ depends only on $\abs{f}_{B^{\al_1,\f}_q}$, $\abs{g}_{B^{\al_2,\f}_q}$, $\|f\|_\infty$, and $\|g\|_\infty$.
\end{lemma}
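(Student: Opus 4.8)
The plan is to split the commutator $G(f,g)_\e - G(f_\e, g_\e)$ into two pieces by interpolating through $G(f_\e,g)_\e$, or more precisely to write the difference as a sum of a part controlled by the regularity of $f$ and a part controlled by the regularity of $g$. Concretely, I would decompose
\begin{equation*}
G(f,g)_\e - G(f_\e,g_\e) = \bigl(G(f,g) - G(f_\e,g)\bigr)_\e + \bigl(G(f_\e,g)_\e - G(f_\e,g_\e)\bigr) + \bigl(G(f_\e,g)_\e - (G(f_\e,g))_\e\bigr),
\end{equation*}
and reorganize: the first bracket uses a Taylor expansion $G(f,g) - G(f_\e,g) = D_fG(f_\e,g)(f-f_\e) + R$, where by hypothesis (1) the remainder $R$ obeys $|R| \lesssim |f - f_\e|^{1+\eta}$. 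The term $\bigl(D_fG(f_\e,g)(f-f_\e)\bigr)_\e$ is the genuine commutator-type object; the standard Constantin--E--Titi trick rewrites it (after adding and subtracting $D_fG((f)_\e \cdot, \cdot)\cdot$ appropriately) so that taking a gradient and estimating in $L^{q/2}$ produces, via the Besov bounds $\|f_\e - f\|_{L^q(\mathcal O_\e)} \le \e^{\al_1}|f|_{B^{\al_1,\f}_q}$ and $\|\nabla f_\e\|_{L^q} \le \e^{\al_1-1}|f|_{B^{\al_1,\f}_q}$, a contribution of order $\e^{\al_1 \cdot 1 + \al_1 - 1} = \e^{2\al_1 - 1}$ at worst, but actually the Hölder exponent $\eta$ on $D_fG$ in the $f$-slot improves the cross term to $\e^{\al_1(1+\eta) - 1}$. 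The $g$-dependent pieces are handled symmetrically using that $g \mapsto D_fG(f,g)$ and $D_gG$ are $C^1$, i.e. Lipschitz (so $\eta = 1$ effectively in the $g$ variable), which yields the $\e^{2\al_2 - 1}$ term.

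In more detail, I would carry out the steps as follows. First, fix the mollifier and record the two Besov estimates quoted in the excerpt, noting $q/2 \ge 1$ so that $L^{q/2}$ and Hölder's inequality with exponents summing appropriately are available. Second, write $G(f,g) - G(f_\e, g_\e) = [G(f,g) - G(f_\e,g)] + [G(f_\e,g) - G(f_\e, g_\e)]$ and mollify. Third, treat the first summand: Taylor-expand in $f$ to first order at $f_\e$ with the integral remainder, use (1) to bound the remainder by $C|f - f_\e|^{1+\eta}$, so that $\nabla_x\bigl((f-f_\e)^{1+\eta} \cdot(\text{bounded}))_\e$ has $L^{q/2}$ norm $\lesssim \e^{-1}\|f - f_\e\|_{L^q}^{1+\eta} \cdot |\mathcal O|^{\text{exponent}} \cdot(1 + \|\nabla\cdot\|)$ hmm — more carefully, one splits the gradient off the mollifier (gaining $\e^{-1}$) and estimates $\|(f-f_\e)^{1+\eta}\|$ using that $f - f_\e$ is small in $L^q$; since $(1+\eta)/q \cdot 2 \le 1$ when $q \ge 2(1+\eta)$, one may need Hölder against the bounded factor, producing $\e^{\al_1(1+\eta)}\e^{-1}$. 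For the linear term $D_fG(f_\e,g)(f - f_\e)$ one does the classical Constantin--E--Titi manipulation: write $D_fG(f_\e,g)(f-f_\e) = D_fG(f_\e,g)f - D_fG(f_\e,g)f_\e$ and note that after mollification and taking $\nabla_x$, the derivative can be moved around so that every term carries either $\nabla f_\e$ (order $\e^{\al_1 - 1}$) times $(f - f_\e)$ or $(f-f_\e)$ (order $\e^{\al_1}$) — here a subtlety is that $D_fG(f_\e, g)$ is only Hölder-$\eta$ in its first slot, so differences $D_fG((f_\e)(x), g) - D_fG((f_\e)(y), g)$ over the mollification scale are $O(\e^{\al_1 \eta})$, which combined with $\nabla$ off the mollifier and $(f-f_\e)$ gives exactly $\e^{-1}\e^{\al_1\eta}\e^{\al_1} = \e^{\al_1(1+\eta)-1}$. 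Fourth, treat the $g$-summand identically but with the $C^1$ (Lipschitz) regularity of $D_gG$ and of $g\mapsto D_fG(f,g)$, which plays the role of $\eta = 1$, giving $\e^{2\al_2 - 1}$. Fifth, collect.

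The main obstacle I anticipate is bookkeeping the Hölder exponents through the $L^{q/2}$ norm: because we are estimating a \emph{product} of two factors each small in $L^q$ (or one in $L^q$ and one a derivative in $L^q$) and taking the $L^{q/2}$ norm of the product, we need Hölder's inequality $\|ab\|_{q/2} \le \|a\|_q\|b\|_q$, and the nonlinear remainder $|f - f_\e|^{1+\eta}$ must be split as $|f-f_\e|\cdot|f-f_\e|^{\eta}$ with the second factor estimated via $L^\infty$ bounds on $f$ and the $L^q$ smallness — i.e. $\||f-f_\e|^\eta\|_{q/\eta}= \|f - f_\e\|_q^\eta$ when $q/\eta \ge q/2$, valid since $\eta \le 1 \le 2$ — and one must confirm $1/q + \eta/q \le 2/q$ plus possibly a slack exponent absorbed by boundedness of $\mathcal O$. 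The hypothesis $q \ge 2\ga/(\ga-1) \ge 2$ (and only $q \ge 2$ is needed here) is what makes these Hölder splittings legitimate; keeping track of which constants depend on $\|f\|_\infty, \|g\|_\infty, |f|_{B^{\al_1,\f}_q}, |g|_{B^{\al_2,\f}_q}$ and the fixed sets $\mathcal O \subset\subset \mathcal O_1$ — so that nothing depends on $\e$ — is the other piece of care required. The second structural hypothesis (joint $C^1$ of $D_gG$, uniform $C^1$ of $g\mapsto D_fG$) is precisely what is needed so that the cross terms mixing $f$- and $g$-mollification errors are also $O(\e^{\al_1} + \e^{\al_2})$ after differentiation, i.e.\ no worse than the diagonal terms.
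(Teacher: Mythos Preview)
Your overall strategy---Taylor expansion combined with Constantin--E--Titi-type commutator bounds---is right, but your treatment of the linear term $\nabla\bigl[(D_fG(f_\e,g)(f-f_\e))_\e\bigr]$ has a gap. Set $a(z)=D_fG(f_\e(z),g(z))$ and push the gradient onto the mollifier:
\begin{equation*}
\nabla\bigl[(a(f-f_\e))_\e\bigr](z) = a(z)\,\nabla(f_\e - (f_\e)_\e)(z) + \int [a(z-y)-a(z)]\,(f-f_\e)(z-y)\,\nabla\eta_\e(y)\,dy.
\end{equation*}
The second integral is indeed $O(\e^{\al_1(1+\eta)-1}+\e^{\al_1+\al_2-1})$ in $L^{q/2}$, as you outline. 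But the first term involves the double-mollification error $\nabla f_\e - \nabla(f_\e)_\e$, whose $L^q$ norm is only $O(\e^{\al_1-1})$, not $O(\e^{\al_1(1+\eta)-1})$. Your claim that ``every term carries either $\nabla f_\e$ times $(f-f_\e)$ or $(f-f_\e)$'' is therefore not correct for this decomposition as stated. (In fact this bad term \emph{does} cancel against a term of opposite sign coming from your second piece $G(f_\e,g)_\e - G(f_\e,g_\e)$ once that piece is expanded in the same way---but you do not identify this cancellation, and without it the argument does not close.)

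The paper sidesteps this issue with a different, cleaner decomposition. Rather than interpolating through $G(f_\e,g)$, it works directly at the level of the gradient and adds and subtracts $D_fG(f(z),g(z))\nabla f_\e + D_gG(f(z),g(z))\nabla g_\e$, with the coefficients evaluated at the \emph{unmollified} point $(f(z),g(z))$. This yields three terms: a single commutator
\[
\nabla G(f,g)_\e - D_fG(f,g)\nabla f_\e - D_gG(f,g)\nabla g_\e,
\]
which, written as a mollification integral, is exactly $\int[\text{second-order Taylor remainder of }G\text{ at }(f(z),g(z))]\,\nabla\eta_\e\,dy$ and is controlled pointwise by $|\Delta_f|^{1+\eta}+|\Delta_f||\Delta_g|+|\Delta_g|^2$; and two straightforward product terms $[D_fG(f,g)-D_fG(f_\e,g_\e)]\nabla f_\e$ and $[D_gG(f,g)-D_gG(f_\e,g_\e)]\nabla g_\e$, each bounded by H\"older's inequality. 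No double mollification $(f_\e)_\e$ ever appears, and no hidden cancellation is required.
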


\begin{proof}Consider the following rearrangement
	\begin{align}
	\nabla_z(G(f,g)_\e-G(f_\e,g_\e))&=\underbrace{\nabla_zG(f,g)_\e(z)-D_fG(f(z),g(z))\nb_zf_\e-D_gG(f(z),g(z))\nb_zg_\e}_{I}\nonumber\\
	&+\underbrace{D_fG(f(z),g(z))\nb_zf_\e-D_fG(f_\e(z),g_\e(z))\nb_zf_\e}_{II}\nonumber\\
	&+\underbrace{D_gG(f(z),g(z))\nb_zg_\e-D_gG(f_\e(z),g_\e(z))\nb_zg_\e}_{III}.\label{eq:lemma-1}
	\end{align}
	We first consider the term $I$,
	\begin{align}
	&\nabla_zG(f,g)_\e(z)-D_fG(f(z),g(z))\nb_zf_\e-D_gG(f(z),g(z))\nb_zg_\e\nonumber\\
	&=\int\limits_{B_\e(0)}\left[G(f(z-y),g(z-y))-D_fG(f(z),g(z))(f(z-y)-f(z))\right]\nabla\eta_\e(y)dy\nonumber\\
	&+\int\limits_{B_\e(0)}\left[-D_gG(f(z),g(z))(g(z-y)-g(z))-G(f(z),g(z))\right]\nabla\eta_\e(y)\,dy.\label{term:1:eq:lemma-1}
	\end{align}
	Let $\De_f(z,y):=f(z-y)-f(z)$ and $\De_g(z,y):=g(z-y)-g(z)$. Then, we note that
	\begin{align}
	&G(f(z-y),g(z-y))-D_fG(f(z),g(z))\De_f(z,y)-D_gG(f(z),g(z))\De_g(z,y)-G(f(z),g(z))\nonumber\\
	&=G(f(z-y),g(z-y))-G(f(z),g(z-y))-D_fG(f(z),g(z-y))\De_f(z,y)\nonumber\\
	&+D_fG(f(z),g(z-y))\De_f(z,y)-D_fG(f(z),g(z))\De_f(z,y)\nonumber\\
	&+G(f(z),g(z-y))-G(f(z),g(z))-D_gG(f(z),g(z))\De_g(z,y)\nonumber\\
	&=\De_f(z,y)\int\limits_{0}^{1}[D_{f}G(f(z)+\theta\De_f(z,y),g(z-y))-D_fG(f(z),g(z-y))]\,d\theta\nonumber\\
	&+\De_f(z,y)\int\limits_{0}^{1} D_gD_{f}G(f(z),g(z)+\theta\De_g(z,y))\cdot\De_g(z,y)\,d\theta\nonumber\\
	&+\int\limits_{0}^{1}\theta D_{gg}G(f(z),g(z)+(1-\theta)\De_g(z,y)):\De_g(z,y)\otimes\De_g(z,y)\,d\theta.\nonumber
	\end{align}
	Therefore, we have
	\begin{align*}
	&\left|G(f(z-y),g(z-y))-D_fG(f(z),g(z))\De_f(z,y)\right.\\
	&\left.-D_gG(f(z),g(z))\De_g(z,y)-G(f(z),g(z))\right|\\
	&\leq C_1\left[\abs{\De_f(z,y)}^{1+\eta}+\abs{\De_f(z,y)}\abs{\De_g(z,y)}+\abs{\De_g(z,y)}^2\right],
	\end{align*}
	We take the $L^{q/2}$ norm of both sides and use Jensen's inequality to obtain
	\begin{align}
	&\|\left(\nabla_zG(f,g)_\e(z)-D_fG(f(z),g(z))\nb_zf_\e-D_gG(f(z),g(z))\nb_zg_\e\right)\|_{L^{q/2}(\mathcal{O})}\nonumber\\
	&\leq C_1\left[\e^{-1}\sup\limits_{\abs{y}\leq \e}\left(\norm{\abs{\De_f(z,y)}^{\frac{q}{2}(1+\eta)}}_{L^1}^{\frac{2}{q}}+\norm{\De_f(z,y)}^2_{L^{q}}+\norm{\abs{\De_g(z,y)}}^2_{L^{q}}\right)\right]\nonumber\\
	&\leq C_1\left[\e^{(1+\eta)\al_1-1}+\e^{2\al_2-1}\right]\label{term:2:eq:lemma-1}
	\end{align}
	(here and in the rest of the proof, the value of $C_1$ may change from line to line.)
	Now, we estimate the terms $II$. By using H\"older's inequality, we have
	\begin{align}
	&\norm{D_fG(f(z),g(z))\nb_zf_\e-D_fG(f_\e(z),g_\e(z))\nb_zf_\e}_{L^{q/2}(\mathcal{O})}\nonumber\\
	&\leq \norm{ D_fG(f(z),g(z))\nb_zf_\e-D_fG(f_\e(z),g(z))\nb_zf_\e}_{L^{q/2}(\mathcal{O})}\nonumber\\
	&+\norm{D_fG(f_\e(z),g(z))\nb_zf_\e-D_fG(f_\e(z),g_\e(z))\nb_zf_\e}_{L^{q/2}(\mathcal{O})}\nonumber\\
	&\leq C_1\left[\norm{\abs{f-f_\e}^{\eta}}_{L^q(\mathcal{O})}+\norm{g-g_\e}_{L^q(\mathcal{O})}\right]\norm{\nabla f_\e}_{L^q(\mathcal{O})}\nonumber\\
	&\leq C_1\left[\e^{\al_1(1+\eta)-1}+\e^{\al_1+\al_2-1}\right]\leq C_1\left[\e^{\al_1(1+\eta)-1}+\e^{2\al_2-1}\right].\label{term:3:eq:lemma-1}
	\end{align}
	Finally, we estimate the term $III$. The argument is similar to \eqref{term:2:eq:lemma-1} and \eqref{term:3:eq:lemma-1}, but somewhat simpler. Indeed, from 
	\begin{align*}
	D_gG(f(z),g(z))-D_gG(f_\e(z),g_\e(z))&=\int_0^1 D_fD_gG(f(z)+s(f_\e(z)-f(z)),g(z))(f_\e(z)-f(z))ds\\
	&\quad\quad+\int_0^1 D_{gg}G(f_\e(z),g(z)+s(g_\e(z)-g(z)))(g_\e(z)-g(z))ds
	\end{align*}	
	we have the pointwise estimate
	\begin{align}
	\left|D_gG(f(z),g(z))-D_gG(f_\e(z),g_\e(z))\right|\leq C_1(|f_\e(z)-f(z)|+|g_\e(z)-g(z)|).
	\end{align}
	%
	Thus, by H\"older's inequality,
	\begin{align}
	&\norm{\left(D_gG(f,g)\nb_zg_\e-D_gG(f_\e,g_\e)\nb_zg_\e\right)}_{L^{q/2}(\mathcal{O})}\nonumber\\
	&\leq C_1\left[\norm{f-f_\e}_{L^q(\mathcal{O})}+\norm{g-g_\e}_{L^q(\mathcal{O})}\right]\norm{\nabla_z g_\e}_{L^q(\mathcal{O})}\nonumber\\
	&\leq C_1\left[\e^{\al_1+\al_2-1}+\e^{2\al_2-1}\right]\left[\abs{f}_{B^{\al_1,\f}_q}+\abs{g}_{B^{\al_2,\f}_q}\right].\label{term:4:eq:lemma-1}
	\end{align}
	Combining \eqref{term:2:eq:lemma-1}, \eqref{term:3:eq:lemma-1} and \eqref{term:4:eq:lemma-1} we conclude Lemma \ref{lemma:commutator}.

\end{proof}

\section{Proof of Theorem~\ref{theorem:vacuum-1}}\label{mainproof}

For the isentropic Euler system we consider the following relative entropy, as introduced by Dafermos~\cite{DafermosBook}: 
\begin{equation}\label{relative:isentropic-1}
\mathcal{E}\left(\vr,\U|r,\V\right):=\frac{1}{2}\vr|\U-\V|^2+H(\vr)-H^{\p}(r)(\vr-r)-H(r).
\end{equation}
We approximate $\mathcal{E}$ as follows
\begin{equation}\label{relative:isentropic-approx}
\mathcal{E}_{\si}\left(\vr,\U|r,\V\right):=\frac{1}{2}\vr|\U-\V|^2+H(\vr)-H_{\si}^{\p}(r)(\vr-r)-H_{\si}(r)
\end{equation}
where $H_{\si}$ is a $C^2$ approximation of $H$ in $C^1$, that is, $H_\si\in C^2[0,\f)$ and satisfies 
\begin{align}
\sup\{\abs{H^\p(z)-H^\p_\si(z)}+\abs{H(z)-H_\si(z)};\,z\in[0,\f)\}&\leq A_1 \si,\\
 \sup\{\abs{H^{\p\p}_\si(z)};\,z\in[0,\f)\}&\leq A_2\si^{\frac{\ga-2}{\min\{\ga,2\}-1}}\mbox{ for all }\si>0
\end{align}
for some $A_1,A_2>0$ independent of $\si$. We consider $p_\si\in C^2$ is a $C^1$ approximation of $p$ satisfying the following for some $B>0$.
\begin{align}
p^{\p}_\si(z)=zH_\si^{\p\p}(z),&\mbox{ for }z\in[0,\f),\\
\sup\{\abs{p^\p(z)-p^\p_\si(z)}+\abs{p(z)-p_\si(z)};\,z\in[0,\f)\}\leq B \si,&\mbox{ for all }\si>0.
\end{align}
We have the following proposition from~\cite{FGJ,FK}:
\begin{proposition}\label{Proposition:isen-1}
	Let $(\vr,\U)$ be an admissible weak solution to the system (\ref{isen1})--(\ref{isen2}). Let $(r,\V)\in C^1([0,T]\times{\Omega})$. Then the following holds 
	\begin{align}
	&\dom\mathcal{E}_\si(\vr,\U|r,\V)(\cdot,\tau_2)dx-\dom\mathcal{E}_\si(\vr,\U|r,\V)(\cdot,\tau_1)dx\nonumber\\
	&\leq\domm \vr(\V-\U)\cdot\pa_t\V+\vr\U\cdot\nb_x\V\cdot(\V-\U)\,dxdt\nonumber\\
	&-\domm p(\vr)\dv_x\V +(\vr-r)H_\si^{\p\p}(r)\pa_tr+\varrho H_\si^{\p\p}(r)\U\cdot\nabla_xr  \,dxdt\label{rel-ent-ineq-1}
	\end{align}
	for $0\leq \tau_1<\tau_2\leq T$.
\end{proposition}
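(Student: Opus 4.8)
My plan is to run the classical relative-entropy argument of Dafermos, with two adjustments dictated by the present setting: $(r,\V)$ is only assumed $C^1$ and is \emph{not} required to solve \eqref{isen1}--\eqref{isen2}, so the ``residuals'' $\pa_t\V$ and $\pa_t r$ must be retained on the right-hand side; and the pressure potential of the would-be strong solution has been regularized to $H_\si\in C^2$ so that $H_\si^{\p\p}$ makes sense up to the vacuum. The starting point is the purely algebraic identity
\begin{align*}
\mathcal{E}_\si(\vr,\U|r,\V)=\Big(\tfrac12\vr|\U|^2+H(\vr)\Big)-\vr\U\cdot\V+\tfrac12\vr|\V|^2-H_\si^{\p}(r)\,\vr+\big(H_\si^{\p}(r)\,r-H_\si(r)\big),
\end{align*}
which follows from $\tfrac12\vr|\U-\V|^2=\tfrac12\vr|\U|^2-\vr\U\cdot\V+\tfrac12\vr|\V|^2$. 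I would integrate this over $\Omega$ and treat the five summands separately.

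For the first summand, $\tfrac12\vr|\U|^2+H(\vr)$ is the energy density of the weak solution, and the admissibility condition \eqref{energy-inequality} bounds the change of its spatial integral on $[\tau_1,\tau_2]$ from above by $0$ (for $\tau_1=0$ this is \eqref{energy-inequality} verbatim). For the fifth summand, which depends on $r\in C^1$ alone, the chain rule gives $\pa_t\big(H_\si^{\p}(r)r-H_\si(r)\big)=rH_\si^{\p\p}(r)\pa_t r$, so its $\Omega$-integral changes by $\domm rH_\si^{\p\p}(r)\pa_t r\,dxdt$ between $\tau_1$ and $\tau_2$. The remaining three summands are handled through the weak formulation: I would test the momentum balance with $\psi=\V$ and the continuity balance with the scalar function $\varphi=\tfrac12|\V|^2-H_\si^{\p}(r)$, which is $C^1$ because $\V,r\in C^1$ and $H_\si\in C^2$. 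Expanding the time and space derivatives that appear ($\pa_t(\tfrac12|\V|^2)=\V\cdot\pa_t\V$, $\nb_x(\tfrac12|\V|^2)=\nb_x\V\cdot\V$, $\pa_tH_\si^{\p}(r)=H_\si^{\p\p}(r)\pa_t r$, $\nb_xH_\si^{\p}(r)=H_\si^{\p\p}(r)\nb_x r$), this produces respectively, as the changes of the $\Omega$-integrals of $-\vr\U\cdot\V$, $\tfrac12\vr|\V|^2$ and $-\vr H_\si^{\p}(r)$, the exact quantities $-\domm(\vr\U\cdot\pa_t\V+\vr\U\otimes\U:\nb_x\V+p(\vr)\dv_x\V)\,dxdt$, then $\domm(\vr\V\cdot\pa_t\V+\vr\U\cdot\nb_x\V\cdot\V)\,dxdt$, and finally $-\domm(\vr H_\si^{\p\p}(r)\pa_t r+\vr H_\si^{\p\p}(r)\U\cdot\nb_x r)\,dxdt$.

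Summing the five contributions — an inequality ``$\leq$'' because of the energy term, equalities for the rest — the integrands on $[\tau_1,\tau_2]\times\Omega$ collapse by elementary cancellations: $-\vr\U\cdot\pa_t\V+\vr\V\cdot\pa_t\V=\vr(\V-\U)\cdot\pa_t\V$, then $-\vr\U\otimes\U:\nb_x\V+\vr\U\cdot\nb_x\V\cdot\V=\vr\U\cdot\nb_x\V\cdot(\V-\U)$, and $rH_\si^{\p\p}(r)\pa_t r-\vr H_\si^{\p\p}(r)\pa_t r=-(\vr-r)H_\si^{\p\p}(r)\pa_t r$, while the terms $-p(\vr)\dv_x\V$ and $-\vr H_\si^{\p\p}(r)\U\cdot\nb_x r$ survive unchanged. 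This is precisely \eqref{rel-ent-ineq-1}.

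Everything after the decomposition is mechanical, so the one place that needs genuine (if routine) care is the approximation step: $\V$ and $\varphi=\tfrac12|\V|^2-H_\si^{\p}(r)$ are neither infinitely smooth nor compactly supported when $\Omega=\R^N$, so one must insert them into the weak formulation through a spatial cutoff and a space-time mollification and pass to the limit. This is where the finite-energy structure of the admissible weak solution is used: the bounds $\vr\in L^\ga$, $\vr|\U|^2\in L^1$ (hence, locally, $\vr\U\in L^{2\ga/(\ga+1)}$), together with boundedness of $(r,\V)$ and its first derivatives, make every integral in the limit finite and let dominated convergence apply. The only other thing to monitor is the bookkeeping that keeps $H$ on the weak-solution side (in the energy) but $H_\si$ on the strong side, so that the strong-solution terms reassemble exactly into $-(\vr-r)H_\si^{\p\p}(r)\pa_t r-\vr H_\si^{\p\p}(r)\U\cdot\nb_x r$.
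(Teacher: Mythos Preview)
Your approach is precisely the standard Dafermos relative-entropy computation; the paper itself gives no proof here and simply cites \cite{FGJ,FK}, which carry out exactly this argument. The decomposition into five summands, the choice of test functions $\psi=\V$ and $\varphi=\tfrac12|\V|^2-H_\si^{\p}(r)$, the algebra of the cancellations, and the $H$-versus-$H_\si$ bookkeeping are all correct.

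One caveat, which you half-acknowledge in your parenthetical: the admissibility condition \eqref{energy-inequality} as formulated in this paper only asserts $E(\tau)\le E(0)$, not $E(\tau_2)\le E(\tau_1)$ for general $\tau_1>0$; the paper even stresses explicitly that the total energy is \emph{not} assumed non-increasing. Hence your argument, read literally, delivers \eqref{rel-ent-ineq-1} only for $\tau_1=0$. This is harmless for the intended application in the proof of Theorem~\ref{theorem:vacuum-1}, where the Gr\"onwall step ultimately only needs $\tau_1=0$ with matching initial data, but the proposition as stated for arbitrary $0\le\tau_1<\tau_2$ is not recoverable from \eqref{energy-inequality} alone; a genuinely stronger energy inequality (as in the cited references) would be required.
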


\begin{proof}[Proof of Theorem \ref{theorem:vacuum-1}]

	
	We divide the proof into three steps.

	\noi\textbf{Step 1.} First we mollify the system \eqref{isen1}--\eqref{isen2} with the same mollifier $\eta_\e$ as in Definition \ref{def:regular-soln-1} and obtain
	\begin{align}
	\pa_tr_\e +\dv_x(r_\e\V_\e)&=\mathcal{R}_1^\e,\label{eq:isen1m}\\
	\pa_t(r_\e\V_\e)+\dv_x(r_\e\V_\e\otimes\V_\e)+\nb_xp(r_\e)&=\mathcal{R}^\e_2,\label{eq:isen2m}
	\end{align}
	where $\mathcal{R}_1^\e$ and $\mathcal{R}_2^\e$ are defined as 
	\begin{align}
	\mathcal{R}^\e_1&=\dv_x(r_\e\V_\e)-\dv_x(r\V)_\e,\\
	\mathcal{R}^\e_2&=\pa_t(r_\e\V_\e)-\pa_t(r\V)_\e+\dv_x(r_\e\V_\e\otimes\V_\e)-\dv_x(r\V\otimes\V)_\e+\nb_xp(r_\e)-\nb_xp(r)_\e.
	\end{align}
	After a modification of \eqref{eq:isen1m} and \eqref{eq:isen2m} we have
	\begin{align}
	\pa_tr_\e&=-\dv_x(r_\e\V_\e)+\mathcal{R}_1^\e,\label{def:r_e}\\
	r_\e\pa_t\V_\e&=-r_\e\V_\e\cdot\nabla\V_\e-r_\e H^{\p\p}(r_\e)\nabla_xr_\e+\mathcal{R}_2^\e-\mathcal{R}_1^\e\V_\e.\label{def:v_e}
	\end{align}
	We modify \eqref{def:v_e} as
	\begin{equation}
		r_\e^\de\pa_t\V_\e=-r^\de_\e\V_\e\cdot\nabla\V_\e-r^\de_\e H^{\p\p}_\si(r_\e)\nabla_xr_\e+\mathcal{R}_2^\e-\mathcal{R}_1^\e\V_\e+\mathcal{M}_\e^\de,\label{def:v-1}
	\end{equation}
	where $r_\e^\de$ will be defined in~\eqref{deltadef} below and
	\begin{equation}\label{def:M}
	\mathcal{M}_\e^\de:=(r_\e^\de-r_\e)\left[\pa_t\V_\e+\V_\e\cdot\nabla\V_\e+H^{\p\p}_\si(r_\e)\nabla_xr_\e\right]+( p^{\p}_\si(r_\e)-p^{\p}(r_\e))\nabla_xr_\e.
	\end{equation}
	\noi\textbf{Step 2.} Now we invoke Proposition \ref{Proposition:isen-1} with $r=r_\e$ and $\V=\V_\e$ to get
	\begin{align}
	& \int\limits_{{\Omega}}\mathcal{E}_\sigma(\vr,\U|r_\e,\V_\e)(\tau_2,\cdot)dx-  \int\limits_{{\Omega}}\mathcal{E}_\sigma(\vr,\U|r_\e,\V_\e)(\tau_1,\cdot)dx\nonumber\\
	&\leq\int\limits_{\tau_1}^{\tau_2} \int\limits_{{\Omega}} \vr(\V_\e-\U)\cdot\pa_t\V_\e+\vr\U\cdot\nb_x\V_\e\cdot(\V_\e-\U)\,dxdt\nonumber \\
	&-\int\limits_{\tau_1}^{\tau_2} \int\limits_{{\Omega}} p(\vr)\dv_x\V_\e +{(\vr-r_\e)}H_\si^{\p\p}(r_\e)\pa_tr_\e +{\vr}H_\si^{\p\p}(r_\e)\U\cdot\nb_xr_\e\,dxdt.\label{cal0:thm}
	\end{align}
	We divide the integral into three parts and estimate separately:
	\begin{align}
	&\int\limits_{\tau_1}^{\tau_2} \int\limits_{{\Omega}} \vr(\V_\e-\U)\cdot\pa_t\V_\e+\vr\U\cdot\nb_x\V_\e\cdot(\V_\e-\U)\,dxdt\nonumber \\
	&-\int\limits_{\tau_1}^{\tau_2} \int\limits_{{\Omega}} p(\vr)\dv_x\V_\e +{(\vr-r_\e)}H_\si^{\p\p}(r_\e)\pa_tr_\e +{\vr}H_\si^{\p\p}(r_\e)\U\cdot\nb_xr_\e\,dxdt\nonumber\\
	=&\underbrace{\int\limits_{\mathcal{W}_\e[\tau_1,\tau_2]}  \vr(\V_\e-\U)\cdot\pa_t\V_\e+\vr\U\cdot\nb_x\V_\e\cdot(\V_\e-\U)\,dxdt}_{=:I_{11}}\nonumber \\
	&-\underbrace{\int\limits_{\mathcal{W}_\e[\tau_1,\tau_2]}  p(\vr)\dv_x\V_\e +{(\vr-r_\e)}H_\si^{\p\p}(r_\e)\pa_tr_\e +{\vr}H_\si^{\p\p}(r_\e)\U\cdot\nb_xr_\e\,dxdt}_{=:I_{12}}\nonumber\\
	&+\underbrace{\int\limits_{[\tau_1,\tau_2]\times\Omega\setminus \mathcal{W}_\e[\tau_1,\tau_2]}  \vr(\V_\e-\U)\cdot\pa_t\V_\e+\vr\U\cdot\nb_x\V_\e\cdot(\V_\e-\U)-p(\vr)\dv_x\V_\e\,dxdt}_{=:I_2}.
	\end{align} 
	Recall from Definition~\ref{def:regular-soln-1} that $r_\e>0$ in $\mathcal{W}_\e[\tau_1,\tau_2]$. By applying \eqref{def:r_e} and \eqref{def:v-1} in $I_{11}$ we have
	\begin{align}
	I_{11}=&\int\limits_{\mathcal{W}_\e[\tau_1,\tau_2]}  \vr(\V_\e-\U)\cdot\pa_t\V_\e+\vr\U\cdot\nb_x\V_\e\cdot(\V_\e-\U)\,dxdt\nonumber \\
	=&\int\limits_{\mathcal{W}_\e[\tau_1,\tau_2]}  -\vr(\V_\e-\U)\cdot\nb_x\V_\e\cdot(\V_\e-\U)-\vr H_\si^{\p\p}(r_\e)(\V_\e-\U)\cdot\nabla_xr_\e\,dxdt\nonumber\\
	&+\int\limits_{\mathcal{W}_\e[\tau_1,\tau_2]} \frac{1}{r^\de_\e}\vr(\V_\e-\U)\cdot[\mathcal{R}_2^\e-\mathcal{R}_1^\e\V_\e+\mathcal{M}_\e^\de]\,dxdt.\label{cal:I11:thm} 
	\end{align}
	By a similar argument, we can modify $I_{12}$ and get 
	\begin{align}
	I_{12}=&\int\limits_{\mathcal{W}_\e[\tau_1,\tau_2]}  p(\vr)\dv_x\V_\e +{(\vr-r_\e)}H_\si^{\p\p}(r_\e)\pa_tr_\e +{\vr}H_\si^{\p\p}(r_\e)\U\cdot\nb_xr_\e\,dxdt\nonumber\\
	=&\int\limits_{\mathcal{W}_\e[\tau_1,\tau_2]} \left(p(\vr)-(\vr-r_\e)p_\si^{\p}(r_\e)-p_\si(r_\e)\right)\dv_x\V_\e-{\vr}H_\si^{\p\p}(r_\e)(\V_\e-\U)\cdot\nb_xr_\e\,dxdt\nonumber\\
	&+\int\limits_{\mathcal{W}_\e[\tau_1,\tau_2]} {(\vr-r_\e)}H_\si^{\p\p}(r_\e) \mathcal{R}_1^{\e}\,dxdt.\label{cal:I12:thm}
	\end{align}
	Note that the complement of $supp(r_\e)$ is a subset of the complement of $supp(r)$. Since $\V$ satisfies \eqref{eqn:mom-vacuum-1} in the complement of $supp(r)$, we get
	\begin{align}
	I_2=&\int\limits_{[\tau_1,\tau_2]\times\Omega\setminus \mathcal{W}_\e[\tau_1,\tau_2]} \vr(\V_\e-\U)\cdot\pa_t\V_\e+\vr\U\cdot\nb_x\V_\e\cdot(\V_\e-\U)-p(\vr)\dv_x\V_\e \,dxdt\nonumber \\
	=&-\int\limits_{[\tau_1,\tau_2]\times\Omega\setminus \mathcal{W}_\e[\tau_1,\tau_2]} \vr(\V_\e-\U)\cdot\nb_x\V_\e\cdot(\V_\e-\U)+p(\vr)\dv_x\V_\e\,dxdt\nonumber\\
	&+\int\limits_{[\tau_1,\tau_2]\times\Omega\setminus \mathcal{W}_\e[\tau_1,\tau_2]} \vr(\V_\e-\U)\cdot(\V_\e\cdot\nabla_x\V_\e-(\V\cdot\nb_x\V)_\e)\,dxdt.\label{cal:I2:thm}
	\end{align}
	Combining \eqref{cal:I11:thm}, \eqref{cal:I12:thm} and \eqref{cal:I2:thm} we obtain
	\begin{align}
	& \int\limits_{{\Omega}}\mathcal{E}_\si(\vr,\U|r_\e,\V_\e)(\tau_2,\cdot)dx-  \int\limits_{{\Omega}}\mathcal{E}_\si(\vr,\U|r_\e,\V_\e)(\tau_1,\cdot)dx\nonumber\\
	&\leq \int\limits_{\tau_1}^{\tau_2} \int\limits_{{\Omega}} -\vr(\V_\e-\U)\cdot\nb_x\V_\e\cdot(\V_\e-\U)\,dxdt+\int\limits_{\mathcal{W}_\e[\tau_1,\tau_2]} \mathcal{R}_3^{\e,\si}+\frac{1}{r^\de_\e}\vr(\V_\e-\U)\cdot[\mathcal{R}_2^\e-\mathcal{R}_1^\e\V_\e+\mathcal{M}_\e^\de]\,dxdt\nonumber \\
	&-\int\limits_{\mathcal{W}_\e[\tau_1,\tau_2]} \left(p(\vr)-(\vr-r_\e)p^{\p}(r_\e)-p(r_\e)\right)\dv_x\V_\e\,dxdt-\int\limits_{[\tau_1,\tau_2]\times\Omega\setminus \mathcal{W}_\e[\tau_1,\tau_2]} p(\vr)\dv_x\V_\e \,dxdt\nonumber \\
	&-\int\limits_{\mathcal{W}_\e[\tau_1,\tau_2]} {(\vr-r_\e)}H_\si^{\p\p}(r_\e) \mathcal{R}_1^{\e}\,dxdt+\int\limits_{[\tau_1,\tau_2]\times\Omega\setminus \mathcal{W}_\e[\tau_1,\tau_2]} \vr(\V_\e-\U)\cdot(\V_\e\cdot\nabla_x\V_\e-(\V\cdot\nb_x\V)_\e)\,dxdt\label{cal:thm}
	\end{align}
	where $\mathcal{R}_3^{\e,\si}$ is defined as 
	\begin{equation*}
	\mathcal{R}_3^{\e,\si}:= \left((\vr-r_\e)(p_\si^{\p}(r_\e)-p^\p(r_\e))+(p_\si(r_\e)-p(r_\e))\right)\dv_x\V_\e.
	\end{equation*}
	\noi\textbf{Step 3.} 
	By using Definition~\ref{def:regular-soln-1}(\ref{A1-3}) in \eqref{cal:thm}, we have
	\begin{align}
	& \int\limits_{{\Omega}}\mathcal{E}_\si(\vr,\U|r_\e,\V_\e)(\tau_2,\cdot)dx-  \int\limits_{{\Omega}}\mathcal{E}_\si(\vr,\U|r_\e,\V_\e)(\tau_1,\cdot)dx\nonumber\\
	&\leq \int\limits_{\tau_1}^{\tau_2} \int\limits_{{\Omega}} \Lambda(t)\mathcal{E}(\vr,\U|r_\e,\V_\e)\,dxdt+\int\limits_{\mathcal{W}_\e[\tau_1,\tau_2]}\mathcal{R}_3^{\e, \sigma}+\frac{1}{r^\de_\e}\vr(\V_\e-\U)\cdot[\mathcal{R}_2^\e-\mathcal{R}_1^\e\V_\e+\mathcal{M}_\e^\de]\,dxdt\nonumber \\
	&-\int\limits_{\mathcal{W}_\e[\tau_1,\tau_2]}{(\vr-r_\e)}H_\si^{\p\p}(r_\e) \mathcal{R}_1^{\e}\,dxdt+\int\limits_{[\tau_1,\tau_2]\times\Omega\setminus\mathcal{W}_\e[\tau_1,\tau_2]} \vr(\V_\e-\U)\cdot(\V_\e\cdot\nabla_x\V_\e-(\V\cdot\nb_x\V)_\e)\,dxdt.\label{cal1:thm}
	\end{align}
	We wish to pass to the limit in \eqref{cal1:thm} as $\e\rr0$. Since $\V\in C^1(([0,T]\times\Omega)\setminus\mathcal{W}_0[0,T])$ and $\mathcal{W}_0\subset\mathcal{W}_\e$, we get
	\begin{align}
 &\int\limits_{[\tau_1,\tau_2]\times\Omega\setminus\mathcal{W}_\e[\tau_1,\tau_2]} \abs{\vr(\V_\e-\U)\cdot(\V_\e\cdot\nabla_x\V_\e-(\V\cdot\nb_x\V)_\e)}\,dxdt\nonumber\\
  &\leq \int\limits_{[\tau_1,\tau_2]\times\Omega\setminus\mathcal{W}_0[\tau_1,\tau_2]} \abs{\vr(\V_\e-\U)\cdot(\V_\e\cdot\nabla_x\V_\e-(\V\cdot\nb_x\V)_\e)}\,dxdt\rr0\mbox{ as }\e\rr0.\label{cal2:thm}
	\end{align}
	Next, we want to prove
	\begin{align}
	\int\limits_{\mathcal{W}_\e[\tau_1,\tau_2]}\mathcal{R}_3^{\e,\si}+\frac{1}{r^\de_\e}\vr(\V_\e-\U)\cdot[\mathcal{R}_2^\e-\mathcal{R}_1^\e\V_\e+\mathcal{M}_\e^\de]\,dxdt&\rr0\mbox{ as }\e\rr0,\label{cal4:thm}\\
		\int\limits_{\mathcal{W}_\e[\tau_1,\tau_2]}{(\vr-r_\e)}H_\si^{\p\p}(r_\e) \mathcal{R}_1^{\e}\,dxdt&\rr0\mbox{ as }\e\rr0.\label{cal3:thm}
	\end{align}
	We first show \eqref{cal4:thm}. To this end we use H\"older's inequality to have
	\begin{align}
	&\abs{	\int\limits_{\mathcal{W}_\e[\tau_1,\tau_2]}\frac{1}{r^\de_\e}\vr(\V_\e-\U)\cdot[\mathcal{R}_2^\e-\mathcal{R}_1^\e\V_\e+\mathcal{M}_\e^\de]\,dxdt}\\
	&\leq \|(r^\de_\e)^{-1}\left(\mathcal{R}_2^\e+\mathcal{M}_\e^\de\right)\|_{L^s(\mathcal{W}_\e[\tau_1,\tau_2])}\|\vr(\V_\e-\U)\|_{L^{s'}(\mathcal{W}_\e[\tau_1,\tau_2])}\nonumber\\
	&+\|(r^\de_\e)^{-1}\mathcal{R}_1^\e\|_{L^s(\mathcal{W}_\e[\tau_1,\tau_2])}\|\vr(\abs{\V_\e}^2-\V\cdot\U)\|_{L^{s'}(\mathcal{W}_\e[\tau_1,\tau_2])},\nonumber
	\end{align}
	where $s=2\ga/(\ga-1)$ and $s'=2\ga/(\ga+1)$. By Young's inequality we get
	\begin{equation*}
	\varrho^{\frac{2\ga}{\ga+1}}\abs{\textbf{u}}^{\frac{2\ga}{\ga+1}}\leq C(\ga)\left[\varrho^{\ga}+\varrho\abs{\U}^2\right].
	\end{equation*}
	By using \eqref{energy-inequality} we have
	\begin{equation*}
	\|\vr(\V_\e-\U)\|_{L^{s'}(\mathcal{W}_\e[\tau_1,\tau_2])}\leq C(\tau_1,\tau_2)(1+\|\V\|_{L^{\f}([\tau_1,\tau_2]\times\Omega)})^2\|\varrho_0^\ga+\varrho_0\abs{\U_0}^2\|_{L^{1}(\Omega)}^{\frac{\ga+1}{2\ga}}
	\end{equation*}
	where $C(\tau_1,\tau_2)$ depends on the volume of $[\tau_1,\tau_2]\times\Omega$. By using the definition of $\mathcal{M}_\e^\de$, \eqref{def:M} we obtain the following
	\begin{align*}
	\norm{\frac{1}{r^\de_\e}\mathcal{M}_\e^\de}_{L^{s}(\mathcal{W}_\e[\tau_1,\tau_2])}&\leq C_0\norm{\frac{r_\e^\de-r_\e}{r^\de_\e}}_{L^{2s}(\mathcal{W}_\e[\tau_1,\tau_2])}\norm{\left(\abs{\pa_t\textbf{v}_\e}+\abs{\nabla\textbf{v}_\e}\right)}_{L^{2s}(\mathcal{W}_\e[\tau_1,\tau_2])}\\
	&+C_0\si^{\frac{\ga-2}{\min\{\ga,2\}-1}}\norm{\frac{r_\e^\de-r_\e}{r^\de_\e}}_{L^{2s}(\mathcal{W}_\e[\tau_1,\tau_2])}\norm{\nabla r_\e}_{L^{2s}(\mathcal{W}_\e[\tau_1,\tau_2])}\nonumber\\
	&+C_1\si\norm{\nabla r_\e}_{L^{2s}(\mathcal{W}_\e[\tau_1,\tau_2])}.
	\end{align*}
	Now we set 
	\begin{equation}\label{deltadef}
	r_\e^\de:=r_\e\left(1+\frac{\de}{ r_\e^p}\right)^{\frac{1}{\tilde q}}\mbox{ whenever }r_\e>0,
	\end{equation}
	for $\de>0, \tilde q>0,p>0$ to be chosen later. By using the inequality $1-\frac{1}{(1+x)^\B}-(1+\B)x\leq0$ for any $\B>0$ and $x\geq0$, we obtain
	\begin{equation*}
	\frac{r_\e^\de-r_\e}{r^\de_\e}=1-\frac{1}{(1+\de r_\e^{-p})^{\frac{1}{\tilde q}}}\leq \frac{C_{\tilde q}\de}{r^{p}_\e}\mbox{ if }r_\e\in(0,1].
	\end{equation*}
	Hence, we have
	\begin{align}
	\norm{\frac{1}{r^\de_\e}\mathcal{M}_\e^\de}_{L^{s}(\mathcal{W}_\e[\tau_1,\tau_2])}
	&\leq C_{\tilde q}\de\norm{\frac{1}{r_\e^{2sp}}}_{L^1(\mathcal{W}_\e[\tau_1,\tau_2])}^{\frac{1}{2s}}\norm{\left(\abs{\pa_t\textbf{v}_\e}+\abs{\nabla\textbf{v}_\e}\right)}_{L^{2s}(\mathcal{W}_\e[\tau_1,\tau_2])}\nonumber\\
	&+C_{\tilde q}\left(\de\si^{\frac{\ga-2}{\min\{\ga,2\}-1}}\norm{\frac{1}{r_\e^{2sp}}}_{L^1(\mathcal{W}_\e[\tau_1,\tau_2])}^{\frac{1}{2s}}+\si\right)\norm{\nabla{r}_\e}_{L^{2s}(\mathcal{W}_\e[\tau_1,\tau_2])}.
	\end{align}
	Since $r\in B^{\B,\f}_q$ and $\textbf{v}\in B^{\al,\f}_q$, and since $q\geq s$ and therefore $B^{\alpha,\infty}_{2q}\subset B^{\alpha,\infty}_{2s}$ and similarly with $B^{\beta,\infty}_{2q}$, we get
	\begin{align}
	\norm{\frac{1}{r^\de_\e}\mathcal{M}_\e^\de}_{L^{s}(\mathcal{W}_\e[\tau_1,\tau_2])}
	& \leq C_{\tilde q} \de\e^{\al-1}\norm{\frac{1}{r^{2sp}_\e}}_{L^1(\mathcal{W}_\e[\tau_1,\tau_2])}^{\frac{1}{2s}}\abs{\textbf{v}}_{B^{\al,\f}_{2q}}\nonumber\\
		&+C_{\tilde q}\left(\de\si^{\frac{\ga-2}{\min\{\ga,2\}-1}}\e^{\B-1}\norm{\frac{1}{r_\e^{2sp}}}_{L^1(\mathcal{W}_\e[\tau_1,\tau_2])}^{\frac{1}{2s}}+\si\e^{\B-1}\right)\abs{r}_{B^{\B,\f}_{2q}}.\label{eq:exp-1}
	\end{align}
	By H\"older's inequality, we obtain
	\begin{align*}
	\|(r^\de_\e)^{-1}\mathcal{R}_2^\e\|_{L^s(\mathcal{W}_\e[\tau_1,\tau_2])}
	&\leq \norm{\frac{1}{r^\de_\e}}_{L^{2s}(\mathcal{W}_\e[\tau_1,\tau_2])}\norm{\mathcal{R}_2^\e}_{L^{2s}(\mathcal{W}_\e[\tau_1,\tau_2])}.
	\end{align*}
	Note that
	\begin{align}
	\frac{1}{r_\e^\de}=\frac{1}{r_\e(1+\de r^{-p}_\e)^{\frac{1}{\tilde q}}}\leq \frac{\de^{-\frac{1}{\tilde q}}}{r_\e^{\frac{\tilde q-p}{\tilde q}}}.
	\end{align}
	Hence, by Lemma \ref{lemma:commutator} we have
	\begin{align}
	\|(r^\de_\e)^{-1}\mathcal{R}_2^\e\|_{L^s(\mathcal{W}_\e[\tau_1,\tau_2])}\leq C_1\de^{-\frac{1}{\tilde q}}(\e^{2\al-1}+\e^{\min\{\ga,2\}\B-1})\norm{\frac{1}{r^{2s\frac{\tilde q-p}{\tilde q}}_\e}}_{L^1(\mathcal{W}_\e[\tau_1,\tau_2])}^{\frac{1}{2s}}.\label{eq:exp-2}
	\end{align}
	For $\mathcal{R}^{\e,\si}_3$ we use H\"older's inequality to get
	\begin{align}
	&\int\limits_{\mathcal{W}_\e[\tau_1,\tau_2]}\abs{\left((\vr-r_\e)(p_\si^{\p}(r_\e)-p^\p(r_\e))+(p_\si(r_\e)-p(r_\e))\right)\dv_x\V_\e}\,dxdt\nonumber\\
	&\leq C_1\si\left(\|\varrho-r_\e\|_{L^{\gamma}((0,T)\times\Omega)}+1\right)\|\nabla_x\textbf{v}_\e\|_{L^{\frac{\gamma}{\gamma-1}}(\mathcal{W}_\e[\tau_1,\tau_2])}\nonumber\\
	&\leq C_1\si\e^{\al-1}\|\varrho-r_\e\|_{L^{\gamma}((0,T)\times\Omega)}.\label{eq:exp-2.5}
	\end{align} 
	The estimate for $\|(r_\e^\de)^{-1}\mathcal{R}_1^\e\|_{L^s(\mathcal W_\e[\tau_1,\tau_2])}$ is even easier and we omit the details. 
	
	Again by H\"older's inequality and Lemma \ref{lemma:commutator} we prove 
	\begin{align}
	&\int\limits_{\mathcal{W}_\e[\tau_1,\tau_2]}\abs{(\vr-r_\e)H_\si^{\p\p}(r_\e)\mathcal{R}_1^\e}\,dxdt\nonumber\\
	&\leq C_1\si^{\frac{\ga-2}{\min\{\ga,2\}-1}}\|\varrho-r_\e\|_{L^{\gamma}((0,T)\times\Omega)}\|\mathcal{R}^\e_1\|_{L^{\frac{\gamma}{\gamma-1}}(\mathcal{W}_\e[\tau_1,\tau_2])}\nonumber\\
	&\leq C_1\si^{\frac{\ga-2}{\min\{\ga,2\}-1}}(\e^{2\al-1}+\e^{2\B-1})\|\varrho-r_\e\|_{L^{\gamma}((0,T)\times\Omega)}.\label{eq:exp-3}
	\end{align} 
	Now we set $\de=\e^{\kappa}$, $\si=\e^{\nu}$ for some $\kappa,\nu>0$ and $p, \tilde q$ as follows:
	\begin{equation*}
	p=\frac{\ga-1}{4\ga}\theta\mbox{ and }\tilde q=\left[\frac{4\ga}{\ga-1}\frac{1}{\theta}-1\right]^{-1}.
	\end{equation*} 
	We wish to pass to the limit as $\e\rr0$ in \eqref{eq:exp-1}, \eqref{eq:exp-2}, \eqref{eq:exp-2.5} and \eqref{eq:exp-3}. To this end we need $\al,\B,\kappa,\nu$ to satisfy the following%
	\begin{align}
	\kappa+\al-1>0,\\
	\kappa+\frac{\ga-2}{\min\{\ga,2\}-1}\nu+\B-1>0,\\
	\nu+\B-1>0,\\
	-\frac{\kappa}{q}+2\al-1>0,\\
	-\frac{\kappa}{q}+\min\{\ga,2\}\B-1>0,\\
	\frac{\ga-2}{\min\{\ga,2\}-1}\nu+2\al-1>0,\\
	\frac{\ga-2}{\min\{\ga,2\}-1}\nu+2\B-1>0.
	\end{align}
	For $\al\geq\B$, it is suffices to get
	\begin{align}
	\kappa+\B-1>0,\label{alg-eq-1}\\
	\kappa+\frac{\ga-2}{\min\{\ga,2\}-1}\nu+\B-1>0,\label{alg-eq-2}\\
	\nu+\B-1>0,\label{alg-eq-3}\\
	-\frac{\kappa}{q}+\min\{\ga,2\}\B-1>0,\label{alg-eq-4}\\
	\frac{\ga-2}{\min\{\ga,2\}-1}\nu+2\B-1>0.\label{alg-eq-5}
	\end{align}
	
	{Note that for $1<\ga<2$ if $\theta,\B$ satisfy the relation $\theta>\frac{4\ga^2}{(\ga-1)((\ga-1)^2\B+1-\B)}(1-\B)$ and $\B> \frac{1}{\ga}$, then we can choose $\kappa,\nu$ as follows:
	\begin{equation*}
	\begin{array}{rll}
	 	\frac{\ga}{\ga-1}(1-\B)&<\kappa&<\left[\frac{4\ga}{\ga-1}\frac{1}{\theta}-1\right]^{-1}(\ga\B-1),\\
	 	1-\B&<\nu&<\min\left\{\frac{\ga-1}{2-\ga}(2\B-1),\frac{1}{2-\gamma}(1-\B)\right\}.
	\end{array}
	\end{equation*}
Since $\kappa>\frac{\ga}{\ga-1}(1-\B)>1-\B$, we get \eqref{alg-eq-1}. Similarly, we have
\begin{align*}
\kappa+\frac{\gamma-2}{\ga-1}\nu+\B-1&>\frac{\ga}{\ga-1}(1-\B)+\frac{\gamma-2}{\ga-1}\nu-(1-\B)\\
&=\frac{1}{\ga-1}(1-\B)-\frac{2-\gamma}{\ga-1}\nu\\
&>\frac{1}{\ga-1}[(1-\B)-(2-\ga)\nu]>0.
\end{align*}
    For $\ga\geq2$, we observe that \eqref{alg-eq-2}, \eqref{alg-eq-5} follows from \eqref{alg-eq-1} and \eqref{alg-eq-3} respectively. Since for $\ga\geq2$, parameters $\theta,\B$ satisfy $\theta>\frac{4\ga}{\ga-1}\frac{1-\B}{\B}$ and $\B>\frac{1}{2}$,} we can choose $\kappa,\nu$ as follows:
	\begin{equation*}
	1-\B<\kappa<\left[\frac{4\ga}{\ga-1}\frac{1}{\theta}-1\right]^{-1}(2\B-1)\mbox{ and }1-\B<\nu<\f.
	\end{equation*}
	This completes the proof of Theorem \ref{theorem:vacuum-1}.

\end{proof}

\section{An Explicit Example}\label{Example}
In this section, we give a fairly explicit example of quite smooth initial data with vacuum outside a ball so that the evolving strong solution satisfies the integrability condition \eqref{relation-theta-beta} and therefore weak-strong uniqueness.

Let $s>2$. We take initial data $\varrho_0,\textbf{u}_0$ satisfying the following:
\begin{enumerate}
	\item The density function $\varrho_0\in C^1(\R^2,[0,\f))$ and 
	\begin{equation*}
	\varrho_0>0\mbox{ in }B(\textbf{0},R)\mbox{ and }\varrho_0=0\mbox{ in }B(\textbf{0},R)^c
	\end{equation*}
	for some $R>0$. Also, $c_0:=\varrho_0^{\frac{\gamma-1}{2}}\in H^s(\R^2)$. Additionally, we assume that
	\begin{equation*}
	\int\limits_{B(\textbf{0},R)}\frac{1}{\varrho_0^\theta}\,dx\leq C_0\mbox{ for some }\theta>0\mbox{ and }C_0>0. 
	\end{equation*}
	
	\item The velocity component $\textbf{u}_0\in C^1(\R^2,\R^2)\cap H^s(\R^2)$. We further assume that
	\begin{equation*}
	\textbf{u}_0(x)=x\mbox{ when }\abs{x}=R.
	\end{equation*}
\end{enumerate}
It is not difficult to see that such $\rho_0$ exists. Indeed, depending on $\gamma$, choose sufficiently large $N>0$ and $\theta<\frac1N$, and let $\varrho_0(x)=(|x|-R)^N$ for $|x|$ smaller than but close to $R$, and $\varrho_0=0$ outside $B(\textbf{0},R)$.

 Now, by \cite{Kato} there exists a solution pair $(\varrho,\textbf{u})\in C(0,T;H^s(\R^2))\cap C^1(0,T;H^{s-1}(\R^2))$ for some $T>0$. Let $\Omega(t):=\{\varrho(t)>0\}$ and $\Gamma(t)$ be the vacuum boundary, that is, $\Gamma(t)=\pa\Omega(t)$. It has been proved in \cite{LiuYang,Serre} (more precisely see \cite[Proposition 2.1]{Serre}) that 
\begin{equation*}
\Gamma(t)=\Psi_t(\Gamma(0))\mbox{ where }\Psi_t(x)=x+t\textbf{u}_0(x).
\end{equation*}
By the choice of $\varrho_0,\textbf{u}_0$ we get $\Gamma(t)=\{(1+t)x;\abs{x}=R\}=\{x;\,\abs{x}=(1+t)R\}$. Suppose $X(t)$ solves
\begin{equation*}
\frac{dX(t,x_0)}{dt}=\textbf{u}(t,X(t,x_0))\mbox{ and }X(0,x_0)=x_0.
\end{equation*}
Since along $\Gamma(t)$, the velocity component $\textbf{u}$ verifies $\pa_t\textbf{u}+\textbf{u}\cdot\nabla\textbf{u}=0$, we have $\textbf{u}(t,X(t,x_0))=\textbf{u}_0(x_0)$ for $x_0\in\Gamma(0)$ (see \cite{Serre}). Hence we have that $(t,X(t,x_0))$ is a straight line in $\R_+\times\R^2$. Therefore, the velocity of $\Gamma(t)$ at time $\tau$ is $R\xi$ where $\xi=\Gamma(t)/\abs{\Gamma(t)}$. Hence, we get
\begin{align*}
\frac{d}{dt}\int\limits_{\Omega(t)}\frac{1}{(\e+\varrho)^\theta}\,dx&=R\int\limits_{\pa\Omega(t)}\frac{1}{(\e+\varrho)^\theta}\,dS-\theta\int\limits_{\Omega(t)}\frac{\pa_t\varrho}{(\e+\varrho)^{1+\theta}}\,dx\\
&=R\int\limits_{\pa\Omega(t)}\frac{1}{(\e+\varrho)^\theta}\,dS+\theta\int\limits_{\Omega(t)}\frac{\dv_x(\varrho\textbf{u})}{(\e+\varrho)^{1+\theta}}\,dx\\
&=R\int\limits_{\pa\Omega(t)}\frac{1}{(\e+\varrho)^\theta}\,dS+\theta\int\limits_{\Omega(t)}\frac{\varrho\dv_x\textbf{u}}{(\e+\varrho)^{1+\theta}}\,dx-\int\limits_{\Omega(t)}\textbf{u}\cdot\nabla\left({\frac{1}{(\e+\varrho)^{\theta}}}\right)\,dx.
\end{align*}
After using integration by parts in the last integral, we obtain
\begin{align*}
\frac{d}{dt}\int\limits_{\Omega(t)}\frac{1}{(\e+\varrho)^\theta}\,dx&=R\int\limits_{\pa\Omega(t)}\frac{1}{(\e+\varrho)^\theta}\,dS+\theta\int\limits_{\Omega(t)}\frac{\varrho\dv_x\textbf{u}}{(\e+\varrho)^{1+\theta}}\,dx+\int\limits_{\Omega(t)}\frac{\dv_x\textbf{u}}{(\e+\varrho)^{1+\theta}}\,dx\\
&-\int\limits_{\pa\Omega(t)}\frac{1}{(\e+\varrho)^\theta}\textbf{u}\cdot\nu\,dS\\
&=R\int\limits_{\pa\Omega(t)}\frac{1}{(\e+\varrho)^\theta}\,dS+\theta\int\limits_{\Omega(t)}\frac{\varrho\dv_x\textbf{u}}{(\e+\varrho)^{1+\theta}}\,dx+\int\limits_{\Omega(t)}\frac{\dv_x\textbf{u}}{(\e+\varrho)^{1+\theta}}\,dx\\
&-R\int\limits_{\pa\Omega(t)}\frac{1}{(\e+\varrho)^\theta}\,dS\\
&\leq C_\theta\norm{\textbf{u}}_{C^1([0,T]\times\R^2)}\int\limits_{\Omega(t)}\frac{1}{(\e+\varrho)^\theta}\,dx.
\end{align*}
In the previous calculation, the second equality follows from the fact that $\textbf{u}(t,X(t,x_0))=u_0(x_0)$. By using Gr\"{o}nwall's inequality and passing to the limit as $\e\rr0$, we get
\begin{equation*}
\int\limits_{\Omega(t)}\frac{1}{\varrho^\theta}\,dx\leq C_1\int\limits_{\Omega(0)}\frac{1}{\varrho_0^\theta}\,dx,
\end{equation*}
where $C_1$ depends on $C_\theta$ and $\norm{\textbf{u}}_{C^1([0,T]\times\R^2)}$. Suppose $0\leq \eta\in C^1(\R\times\R^2)$ such that 
{
	\begin{equation*}
supp[\eta]=[0,1]\times\overline{B(\textbf{0},1)}\mbox{ and }	\int\limits_{[0,1]}\int\limits_{B(\textbf{0},1)}\frac{1}{\eta^\theta}\,dyds\leq C_\eta<\f.
\end{equation*}
We extend $\varrho$ beyond $[0,T]\times\R^2$ by $0$, that is,
\begin{equation}
\bar{\varrho}(t,x):=\left\{\begin{array}{ll}
\varrho(t,x)&\mbox{ if }t\in[0,T],\,x\in\R^2,\\
0&\mbox{ otherwise. }
\end{array}\right.
\end{equation}
We define $\varrho_\e:=\bar{\varrho}*\eta_\e$. Then, we observe that
\begin{align*}
\varrho_\e(t,x)=\bar{\varrho}*\eta_\e(t,x)&=\int\limits_{[0,\e]}\int\limits_{\abs{y}\leq \e}\frac{1}{\e^2}\eta(s/\e,y/\e)\bar{\varrho}(t-s,x-y)\,dyds\\
&=\int\limits_{[0,1]}\int\limits_{\abs{y}\leq 1}\eta(s,y)\bar{\varrho}(t-\e s,x-\e y)\,dy.
\end{align*}
Note that $\bar{\varrho}(t-\e s,x-\e y)$ vanishes for the following two cases:
\begin{enumerate}[(a)]
	\item $t-\e s<0$ or $t-\e s>T$,
	\item $t-\e s\in[0,T]$ and $x-\e y\notin B(\textbf{0},(1+t-\e s)R)$.
\end{enumerate}
To simplify the notations, we define $\mathcal{I}_{t,\e}=[\max\{0,(t-T)/\e\},t/\e]$ and 
\begin{equation}
\Omega_{t,x,\e}(s):=\{\abs{y}\leq 1; x-\e y\in B(\textbf{0},(1+t-\e s)R)\}. 
\end{equation} Therefore,
\begin{equation*}
\varrho_\e(t,x)=\varrho*\eta_\e(t,x)=\int\limits_{\mathcal{I}_{t,\e}}\int\limits_{\Omega_{t,x,\e}(s)}\eta(s,y)\varrho(t-\e s,x-\e y)\,dyds.
\end{equation*}
In this example $\mathcal{W}_\e$ can be taken as $\bigcup\limits_{0\leq t\leq T}\{t\}\times B(\textbf{0},(1+t)R+\e)$. 
For technical purposes, let us introduce another parameter $\delta>0$, and define $\varrho_{\e,\de}$ as follows:
\begin{equation}
\varrho_{\e,\de}(t,x):=\int\limits_{\mathcal{I}_{t,\e}}\int\limits_{\Omega_{t,x,\e}(s)}(\eta(s,y)+\de)(\varrho(t-\e s,x-\e y)+\de)\,dyds.
\end{equation}
Subsequently, by using Jensen's inequality we get
\begin{align*}
&\int\limits_{\mathcal{W}_\e}\frac{1}{\varrho_{\e,\de}^\theta}\,dxdt\\
&\leq C\int\limits_{0}^T\int\limits_{B(\textbf{0},(1+t)R+\e)}\int\limits_{\mathcal{I}_{t,\e}}\int\limits_{\Omega_{t,x,\e}(s)}\frac{1}{(\eta(s,y)+\de)^\theta(\varrho(t-\e s,x-\e y)+\de)^\theta}\,dydsdxdt\\
&= C\int\limits_{[0,1]}\int\limits_{\abs{y}\leq 1}\frac{1}{(\eta(s,y)+\de)^\theta}\int\limits_{[\e s,T+\e s]}\int\limits_{B(\e y,(1+t+\e s)R)}\frac{1}{(\varrho(t-\e s,x-\e y)+\de)^\theta}\,dxdtdyds.
\end{align*}
In the last line we have used Fubini's theorem. By a change of variable, we obtain
\begin{align*}
\int\limits_{\mathcal{W}_\e}\frac{1}{\varrho_{\e,\de}^\theta}\,dxdt&\leq C\int\limits_{[0,1]}\int\limits_{\abs{y}\leq 1}\frac{1}{(\eta(s,y)+\de)^\theta}\int\limits_{[0,T]}\int\limits_{B(\textbf{0},(1+t)R)}\frac{1}{(\varrho(t,x)+\de)^\theta}\,dxdtdyds\\
&\leq C  \int\limits_{[0,1]}\int\limits_{\abs{y}\leq 1}\frac{1}{\eta^\theta(s,y)}\int\limits_{[0,T]}\int\limits_{B(\textbf{0},(1+t)R)}\frac{1}{\varrho^\theta(t,x)}\,dxdtdyds\leq C_2,
\end{align*}
where $C_2$ does not depend on $\e$ and $\de$. Note that $1/\varrho_{\e,\de}\rr1/\varrho_\e$ pointwise for $(t,x)\in\mathcal{W}_\e$ as $\de\rr0$. Hence, by the Lebesgue Dominated Convergence Theorem we have
\begin{equation*}
\int\limits_{\mathcal{W}_\e}\frac{1}{\varrho_\e^\theta}\,dxdt\leq  C_2.
\end{equation*}
}

\bigskip
\noindent\textbf{Acknowledgement.} SSG and AJ acknowledge the support of the Department of Atomic Energy, Government of India, under project no. 12-R\&D-TFR-5.01-0520. SSG would like to thank Inspire faculty-research grant DST/INSPIRE/04/2016/000237.

\end{document}